\title{Convex polygons and separation of convex sets\thanks{Partially supported by Conacyt, M\'exico.}}
\author [1] {Eduardo Rivera-Campo}
\author [2] {Jorge Urrutia}
\affil [1] {Departamento de Matem\'aticas\\Universidad Aut\'onoma Metropolitana - Iztapalapa\\ \text{erc@xanum.uam.mx}}
\affil [2] {Instituto de Matem\'aticas\\Universidad Nacional Aut\'onoma de M\'exico\\
\text{urrutia@matem.uam.mx}}
\date{}
\newtheorem{lemma}{Lemma}
\newtheorem{theorem}{Theorem}
\begin{document}

\maketitle

\begin{abstract}
We prove that for any collection $F$  of $n \geq 2$ pairwise disjoint compact convex sets in the plane there is a pair of sets $A$ and $B$ in $F$ such that any line that separates $A$ from $B$ separates either $A$ or $B$ from a subcollection of $F$ with at least  $n/18$ sets.\\

\noindent \textbf{Keywords.-} Convex polygon. Plane Compact Convex Set. Separating line.
    
\end{abstract}

\section{Introduction}

H. Tverberg \cite{T} proved that for each positive integer $k$, there is a minimum integer $f(k)$ such that for every collection $F$ of $f(k)$ or more plane compact convex sets with pairwise disjoint interiors, there is a line that separates one set in $F$  from a subcollection of $F$ with at least $k$ sets. R. Hope and M. Katchalski \cite{HK} showed that $3k+1 \leq f(k) \leq 12k-1$.

Later E. Rivera-Campo and J. T\"or\"ocsik \cite{RT} proved that in any collection  $F$ of $n\geq5$ pairwise disjoint compact convex sets in the plane, there is a pair of sets $A$ and $B$ such that any line that separates $A$ from $B$ separates either $A$ or $B$ from at least $\frac{n+28}{30}$ sets in $F$. In this paper we give a higher lower bound of $\frac{n}{18}$ sets in $F$ for any collection $F$ of $n\geq 2$ sets.

\section{Preliminary results}

H. Edelsbrunner \emph{et al} \cite{ERS} proved the following theorem, see L. Fejes-Toth \cite{F} for a related result. 

\begin{theorem}
\label{TeoEdelsbrunner}
Any collection of $n \geq 3$ compact, convex and pairwise disjoint sets in the plane may be covered with non-overlapping convex polygons with a total of not more than $6n-9$ sides. Furthermore no more than $3n-6$ distinct slopes are required.
\end{theorem}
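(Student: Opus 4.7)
My plan is to build the polygonal cover using an adjacency graph on $F$ together with a carefully chosen separating line for each adjacency, so that both bounds arise simultaneously from the planarity of the adjacency structure.

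First I would define a graph $G$ on the $n$ sets by joining $S_i$ to $S_j$ whenever they share a common boundary arc in the generalized (nearest-site) Voronoi diagram of $F$, i.e., when there is a point in the plane that is equidistant from $S_i,S_j$ and strictly closer to them than to any other $S_k$. Standard arguments for such diagrams embed $G$ in the plane without crossings, whence $|E(G)|\leq 3n-6$. For each edge $\{S_i,S_j\}$ of $G$ I would select a common inner tangent line $\ell_{ij}$ strictly separating $S_i$ from $S_j$; this yields a family of at most $3n-6$ lines and hence at most $3n-6$ distinct slopes, delivering the second conclusion of the theorem.

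Next I would enclose $F$ in a large triangle $T$ and, for each $S_i$, define the candidate polygon $P_i$ as the intersection of $T$ with the closed half-planes bounded by the $\ell_{ij}$ (as $S_j$ ranges over the neighbours of $S_i$ in $G$) that contain $S_i$. Each $P_i$ is then a convex polygon containing $S_i$, and the choice of inner tangents forces the $P_1,\ldots,P_n$ to have pairwise disjoint interiors. To bound the total number of sides I would observe that each line $\ell_{ij}$ contributes at most one side to $P_i$ and one to $P_j$, giving at most $2(3n-6)=6n-12$ sides supported on the chosen tangents; the enclosing triangle $T$ accounts for at most $3$ further sides across the whole cover, for a grand total of $6n-9$, as required.

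The main obstacle is to justify both the planarity of $G$ and the fact that the selected tangent lines really yield pairwise non-overlapping convex polygons. Euclidean Voronoi cells for arbitrary convex sets are only star-shaped and need not be convex, so the naive ``clip the diagram by its bisectors'' approach does not directly produce convex $P_i$. One must either replace Euclidean distance by a more suitable measure (for instance a tangent-based or weighted distance) or argue directly, using a sweep or a Helly-type argument on the tangent arrangement, that each pair of chosen tangents meets outside the polygons they bound. This combinatorial/topological step is the technical core of the proof; once it is in place, the counting of slopes and sides via planarity of $G$ is routine.
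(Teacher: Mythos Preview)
The paper does not prove this theorem; it is quoted from Edelsbrunner--Robison--Shen \cite{ERS}, and only part of their argument is adapted afterwards to prove the weaker Lemma~\ref{lema} (bound $9n-9$). That adaptation already shows that the original approach is quite different from yours: rather than fixing a planar adjacency graph in advance and reading off one separating line per edge, one starts from the given polygons and iteratively \emph{grows} them by absorbing reducible sides, then bounds the total number of sides via a planar contact multigraph of the grown family.

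Your outline has two genuine gaps. First---and you acknowledge this yourself---the key claim that the polygons $P_i = T\cap\bigcap_{j\sim i} H_{ij}$ are pairwise non-overlapping is not established. For a pair $S_i,S_k$ that is \emph{not} adjacent in $G$ there is no line $\ell_{ik}$ in your family, and nothing in the construction prevents $P_i\cap P_k$ from having interior; without this you do not yet have a cover by non-overlapping polygons, so what you have written is a plan rather than a proof. Second, the side count is incorrect: the enclosing triangle $T$ does \emph{not} contribute ``at most $3$ further sides across the whole cover''. Every $P_i$ that meets $\partial T$ picks up one or more sides on $T$, and many polygons may do so (take $n$ sets in a row separated by nearly parallel lines: each $P_i$ then has two sides on $T$). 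A direct count of your construction gives at most $\sum_i(\deg_G(S_i)+3)=2|E(G)|+3n\le 9n-12$ sides, not $6n-9$. Dropping $T$ would give at most $6n-12$ sides but leaves the $P_i$ unbounded, which you would then have to justify as admissible ``polygons''.
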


We adapt part of the proof given in \cite{ERS} to obtain the following result.

\begin{lemma}
\label{lema}
Let $\mathcal{P} = \{P_1, P_2, \ldots, P_n\}$ be a collection of $n \geq 3$ pairwise disjoint convex polygons in the plane. There exists a collection $\mathcal{R} = \{R_1, R_2, \ldots, R_n\}$ of pairwise non-overlapping convex polygons such that:
\begin{enumerate}
    \item For $i=1, 2, \ldots, n$, $P_i$ is contained in $R_i$.
    \item For $i=1, 2, \ldots, n$, each side of $R_i$ supports a side of $P_i$.
    \item The total number of sides among polygons $R_1, R_2, \ldots, R_n$ is at most $9n-9$.
\end{enumerate}
\end{lemma}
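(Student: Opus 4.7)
My plan is to adapt the construction used by Edelsbrunner et al.\ in the proof of Theorem~\ref{TeoEdelsbrunner}, modifying it so that every side of each enlarged polygon lies along the extension of a side of the corresponding $P_i$. The construction in \cite{ERS} produces non-overlapping convex polygons $Q_1,\dots,Q_n$ with $P_i\subseteq Q_i$ and at most $6n-9$ sides in total, where each side of $Q_i$ lies on a line that is tangent to $P_i$ (touching $P_i$ either along an entire side or at a single vertex). The plan is to keep those sides of the $Q_i$ that already support sides of $P_i$, and to replace the remaining ones by segments along the extensions of sides of $P_i$, at a controlled cost in the number of sides.

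I would first classify each side $e$ of each $Q_i$ as either \emph{side-supporting}, if the line through $e$ contains an entire side of $P_i$, or \emph{vertex-supporting}, if the line through $e$ meets $P_i$ only at a vertex $v$. In the first case $e$ already satisfies condition~2 of the lemma, so nothing need be done. In the second case, I would replace $e$ by two segments, each lying on the extension of one of the two sides of $P_i$ incident to $v$. These two extensions form a convex wedge at $v$ that is contained in the closed supporting half-plane bounded by the line through $e$, so the replacement shrinks $Q_i$ to a smaller convex polygon $R_i$ still containing $P_i$ whose new pair of boundary segments is side-supporting. Because every $R_i$ is contained in the corresponding $Q_i$ and the $Q_i$'s are pairwise non-overlapping, so are the $R_i$'s; conditions~1 and~2 then hold by construction.

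Each such replacement adds at most one new side to $R_i$ beyond the side count of $Q_i$, so condition~3 will follow once I show that the total number of vertex-supporting sides appearing across all the $Q_i$'s is at most $3n$, giving a total of at most $(6n-9)+3n=9n-9$ sides. I expect this counting step to be the main obstacle and the point at which the argument must go beyond \cite{ERS}. My intention is to read off the bound from the tangency structure intrinsic to the ERS construction: vertex-supporting sides correspond to changes of ``slope regime'' of $Q_i$ at a vertex of $P_i$, and an Euler-formula argument on the planar tangency graph between the polygons $P_i$ (whose edges are the common tangents used in the ERS construction) should bound their number by three per polygon on average, yielding the required $3n$. Once this bound is in hand, summing gives the claimed $9n-9$ and completes the proof.
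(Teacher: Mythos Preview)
Your construction of the $R_i$'s is sound: intersecting each $Q_i$ with the two half-planes determined by the sides of $P_i$ at each vertex-tangency does produce pairwise non-overlapping convex polygons satisfying conditions~1 and~2, and each such replacement increases the side count by at most one. The genuine gap is the bound of $3n$ on the number of vertex-supporting sides of the $Q_i$'s. The ``Euler-formula argument on the planar tangency graph'' you sketch is not an argument, and in fact the bound seems to fail in general: if the $P_i$ are tiny polygons in sufficiently generic position, the $6n-9$ supporting lines of the ERS cover will almost surely meet each $P_i$ at a vertex rather than along a side, so \emph{all} sides can be vertex-supporting. Your accounting then yields only $2(6n-9)=12n-18$ sides, not $9n-9$. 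Without an additional idea controlling how many ERS sides are vertex-supporting (and nothing in \cite{ERS} gives this), the route does not reach the stated bound.

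The paper's proof avoids this difficulty by working in the opposite direction. Rather than shrinking an ERS cover, it \emph{grows} each $P_i$: a side $s$ of $P_i$ is called reducible if the outward triangle bounded by $s$ and the extensions of the two adjacent sides misses all other polygons; one repeatedly absorbs such triangles until no reducible sides remain, obtaining $\mathcal{R}$. Because every step only extends existing sides of $P_i$, condition~2 holds automatically and no vertex-tangency ever appears. The side count $9n-9$ is then obtained by a further auxiliary growth to a family $\mathcal{Q}$ inside a large circle and a somewhat delicate planar-multigraph argument (with an extra vertex for the outer region and several edge types coming from the contacts between the $Q_i$'s), using Euler's formula on $n+1$ vertices. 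So both the construction and the counting mechanism are quite different from what you propose.
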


\begin{proof}

A side $s$ of a polygon $P_i \in \mathcal P$ is \emph{reducible with respect to $\mathcal{P}$} if the triangle $t_s$ (not containing $P_i$), bounded by $s$ and the lines supporting the sides of $P_i$ incident to $s$, does not intersect the interior of any another polygon $P_j$. Equivalently, a side $s$ of a polygon $P_i \in \mathcal{P}$ is reducible with respect to $\mathcal{P}$ if it vanishes before reaching the interior of a polygon $P_j$ when it is translated in the direction orthogonal to $s$ and away from $P_i$ (see Fig. \ref{ReducibleSide}).

\begin{figure}[h!]
\centering
  \includegraphics[width= 1.7in]{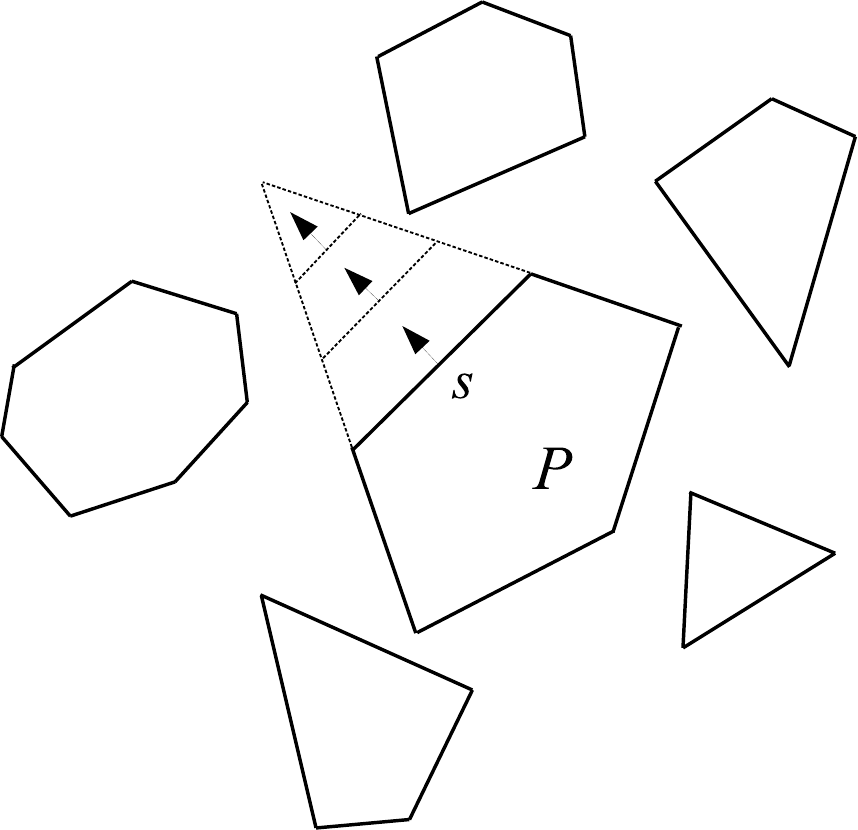}
  \caption{Polygon $P$ with a reducible side $s$.}
  \label{ReducibleSide}
\end{figure}

We modify $\mathcal{P}$ by growing, one at a time, polygons in $\mathcal{P}$  as follows: if some polygon $P_i \in \mathcal{P}$ has a reducible side $s$ with respect to $\mathcal{P}$ substitute $P_i$ in $\mathcal{P}$ with polygon $P_i \cup t_s $. Repeat this until no polygon in $\mathcal{P}$ has a reducible side. Denote by $\mathcal{R} = \{R_1, R_2, \ldots, R_n\}$ the family of polygons thus obtained. 

Its is clear that $\mathcal{R}$ satisfies conditions 1) and 2), we claim it also satisfies condition 3. To prove this consider a third family $\mathcal{Q} = \{Q_1, Q_2, \ldots, Q_n\}$ of convex polygons obtained from $\mathcal{R}$ by further growing polygons $R_1, R_2, \ldots, R_n$ in the following way.

A side $s$ of a polygon $R_i$ in $\mathcal{R}$ is \emph{free} with respect to $\mathcal{R}$ if the open polygonal region, not containing $R_i$, which is determined by $s$ and the lines supporting the two sides of $R_i$ adjacent to $s$ contains no points of polygons in $\mathcal{R}$. 

Let $C$ be a circle containing all polygons in $\mathcal{R}$. Starting with the non free sides of polygons $R_i \in \mathcal{R}$, translate one at a time a side $s$ of a polygon $R_i \in \mathcal{R}$ (also in the direction orthogonal to $s$ and away from $R_i$) as far as possible without intersecting the interior of another (possibly already grown) polygon $R_j$ or the exterior of $C$ (see Fig. \ref{translate}).

\begin{figure}[h!]
\centering
  \includegraphics[width= 3.5in]{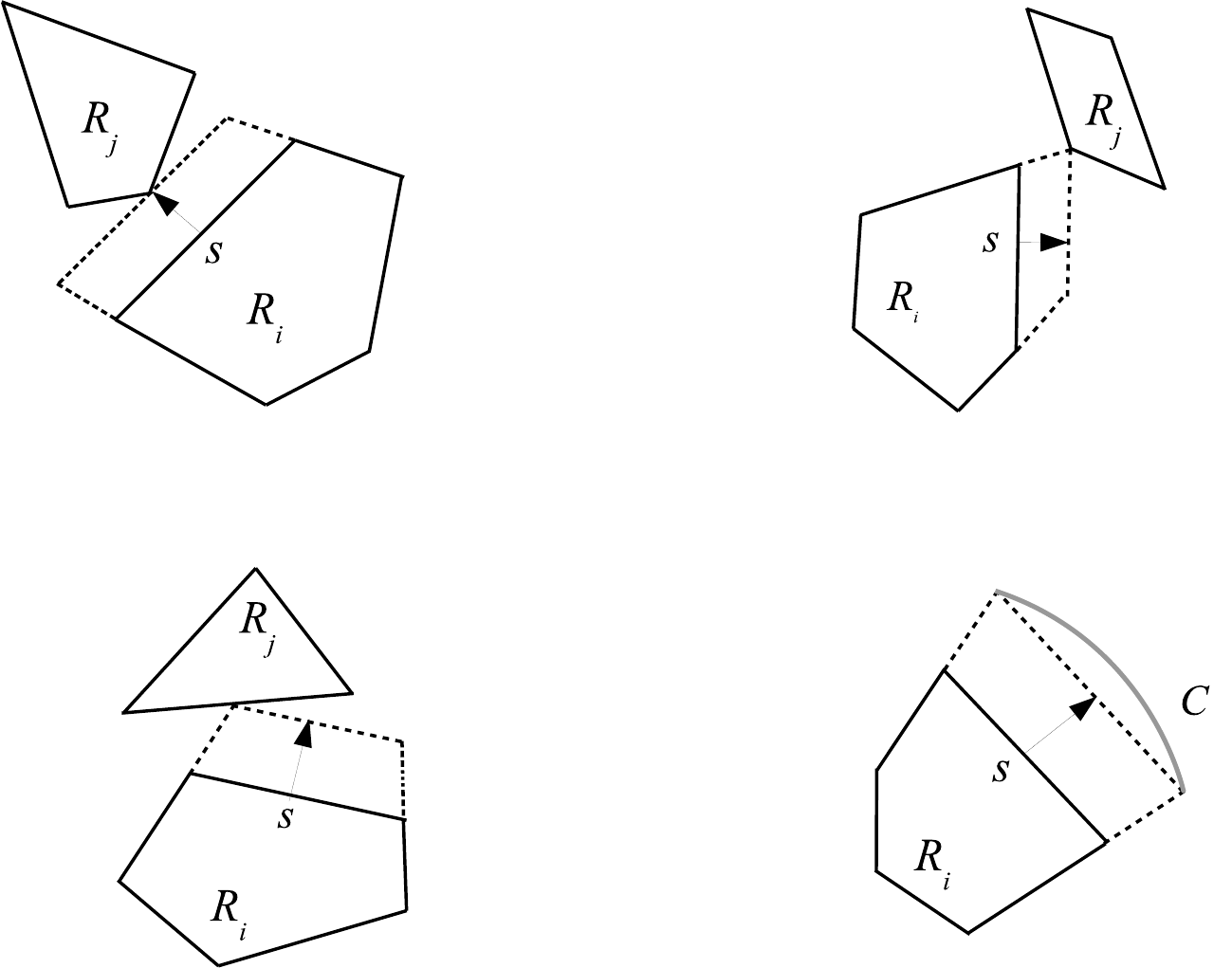}
  \caption{Side $s$ of $R_i$ reaches a polygon $R_j$ or reaches circle $C$.}
  \label{translate}
\end{figure}

Stop when no side can be translated as indicated and let $\mathcal{Q} = \{Q_1, Q_2, \ldots, \allowbreak Q_n\}$ where, for $i=1, 2, \ldots, n$, $Q_i$ is the polygon obtained from $R_i$ in this manner. We claim that if $C$ is chosen large enough, then each polygon in $\mathcal{Q}$ contains at most 3 free sides and all non free sides of a polygon $Q_i$ in $\mathcal{Q}$ are in contact with a polygon $Q_j$ with $i \neq j$.

By the choice of $\mathcal{R}$ no side of a polygon $R_j$ vanishes, therefore for $i= 1, 2, \ldots, n$ the number of sides of $R_i$ is equal to the number of sides of $Q_i$. 

We define a plane graph  $G$ with one vertex $v_i$ placed in the interior of each polygon $Q_i \in \mathcal{Q}$ and  a vertex $v_{n+1}$ placed outside circle $C$. The edge set of $G$ consists of 6 types of edges as follows:

a) If a side of a polygon $Q_i$ and a side of a polygon $Q_j$ have a segment $l$ in common choose an arbitrary point $p$ in $l$ and draw an edge $v_iv_j$ through $p$ as in Fig. \ref{a)b)c)} (left).

b) If a vertex $p$ of a polygon $Q_i$ lies in the interior of a side of a polygon $Q_j$  and $p$ is not a vertex of a polygon in $\mathcal{Q}$ other than $Q_i$, then draw an edge $v_iv_j$ through $p$ as in Fig.  \ref{a)b)c)} (center).

c) If two or more polygons $Q_{i_1}, Q_{i_2}, \ldots, Q_{i_r}$ share a vertex $p$ which lies in the interior of a side of a polygon $Q_j$ and $p$ is not a point of any polygon $Q_k$ other than $Q_{i_1}, Q_{i_2}, \ldots, Q_{i_r}$, then there is a neighbourhood $N$ of $p$ containing no points of polygons $Q_k$ with $k \neq j,i_1, i_2, \ldots, i_r$. In this case draw a cycle with edges $v_jv_{i_1}, v_{i_1}v_{i_2}, v_{i_2}v_{i_3}, \ldots, v_{i_{r-1}}v_{i_r}, v_{i_r}v_j$ as in Fig.  \ref{a)b)c)} (right).

\begin{figure}[h!]
\centering
  \includegraphics[width= 4in]{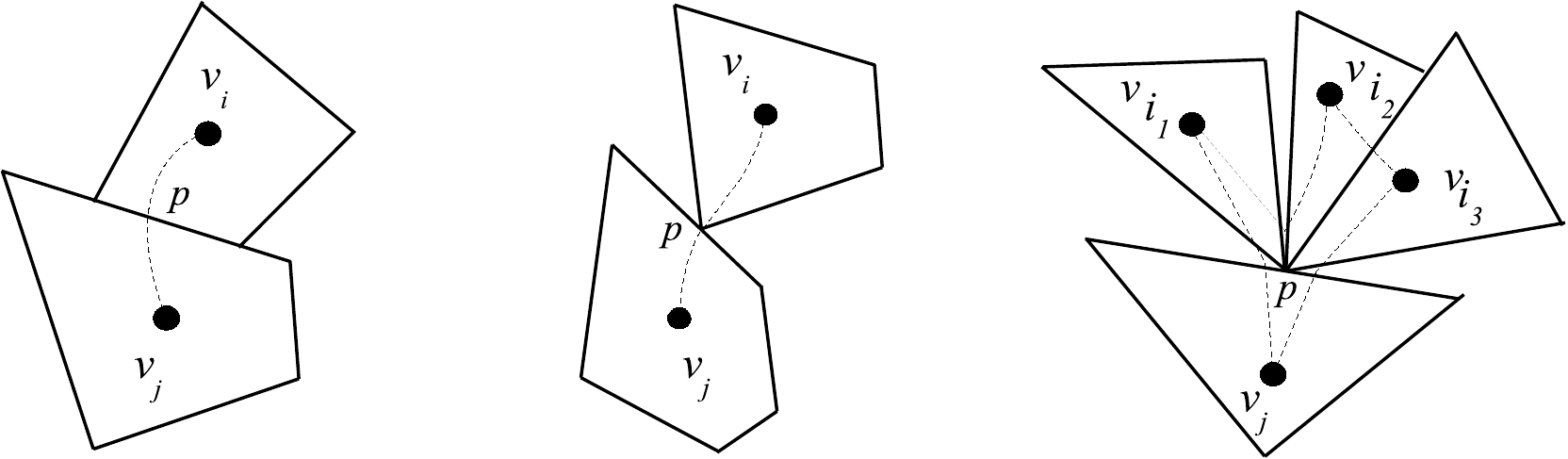}
  \caption{Edge type a) (left), edge type b) (center) and edges type c) (right).}
  \label{a)b)c)}
\end{figure}

d) If three or more polygons $Q_{i_1}, Q_{i_2}, \ldots, Q_{i_r}$ share a vertex $p$ and $p$ is not a point of any polygon $Q_k$ other than $Q_{i_1}, Q_{i_2}, \ldots, Q_{i_r}$, then there is a neighbourhood $N$ of $p$ containing no points of any polygon $Q_k$ with $k \neq i_1, i_2, \ldots, i_r$. In this case draw a cycle with edges $v_{i_1}v_{i_2}, v_{i_2}v_{i_3}, \ldots, v_{i_{r-1}}v_{i_r}, v_{i_r}v_1$ as in Fig. \ref{d)e} (left).

e) If two polygons $Q_i$ and $Q_j$ share a vertex $p$ and $p$ is not a point of any polygon in $\mathcal{R}$ other than $Q_i$ and $Q_j$, then draw an edge $v_iv_j$ through $p$ as in Fig. \ref{d)e} (right).

\begin{figure}[h!]
\centering
  \includegraphics[width= 3in]{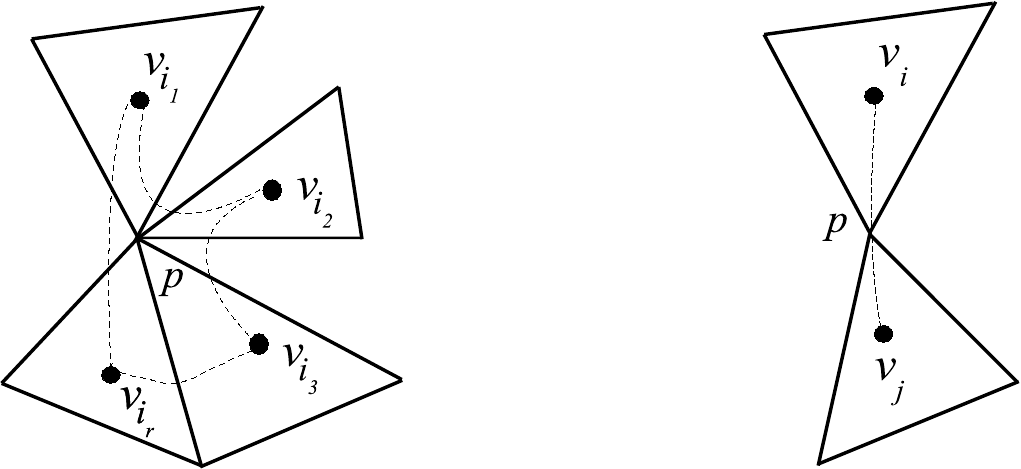}
  \caption{Edges type d) (left) and edge type e) (right).}
  \label{d)e}
\end{figure}

f) For each free side $s$ of a polygon $Q_i$ add an edge  $v_iv_{n+1}$ drawn through an interior of point $p$ in $s$. 

Notice that edges of type f) are the only possible multiple edges of $G$. Therefore $G$ is a plane multigraph with $n+1$ vertices and $m \leq (3(n+1) - 6) + (x_2 + 2x_3) = (3n-3) + (x_2 + 2x_3)$ edges, where for $i = 2, 3$, $x_i$ denotes the number of polygons in $\mathcal{Q}$ with $i$ free sides.

\begin{figure}[h!]
\centering
  \includegraphics[width= 4in]{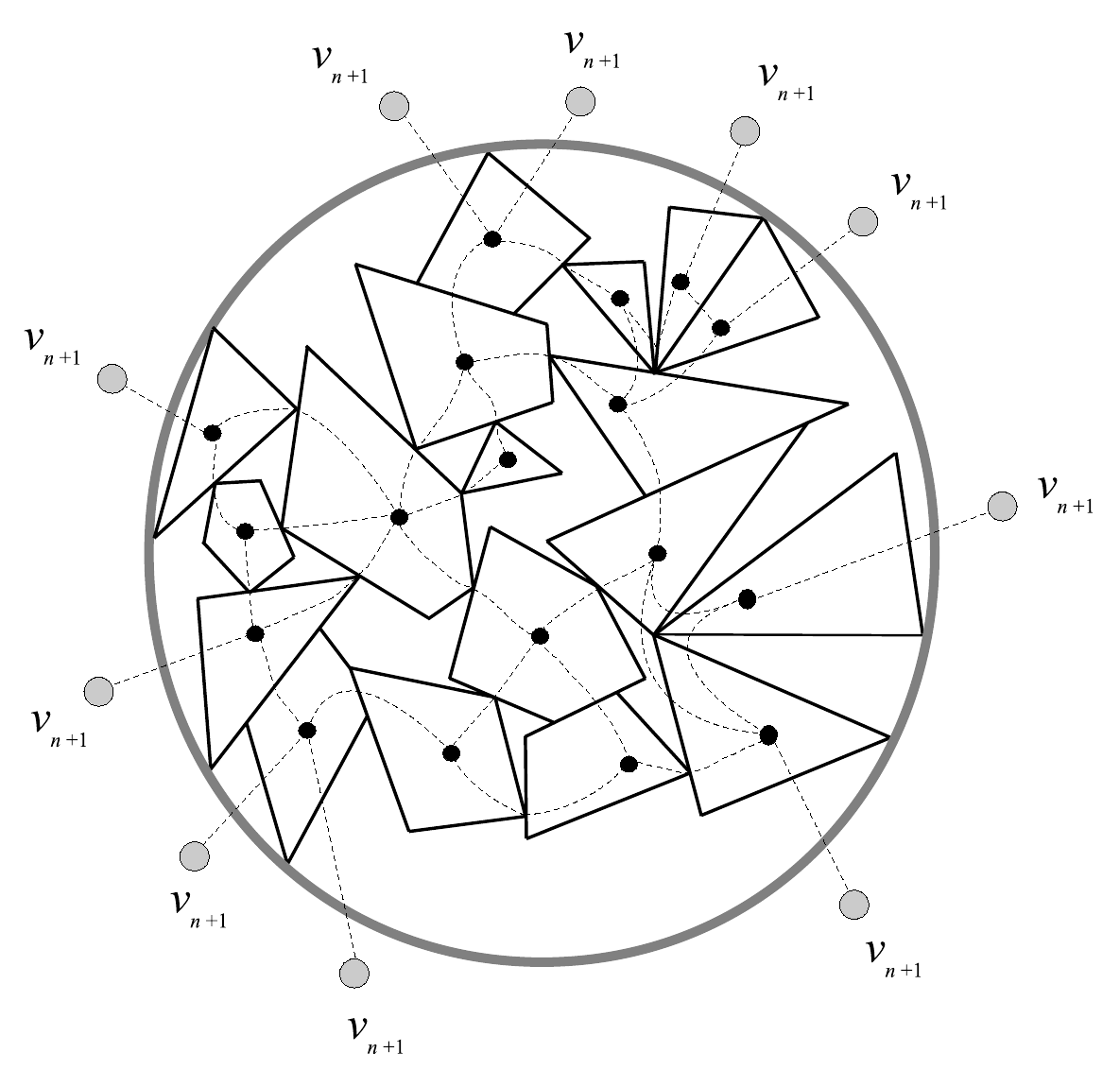}
  \caption{Graph $G$ with $n = 18$, $x_1 = 6$, $x_2 = 2$, $m_e = 1$ and $m_f = 10$.}
  \label{Q + C}
\end{figure}

We say that an edge $e= v_iv_j$ of $G$ intersects a side $s$ of a polygon in $\mathcal{Q}$ if $e$ is drawn through either an interior point of $s$ or through a vertex of $s$. Each non free side among polygons in $\mathcal{Q}$ is intersected by at least one edge of $G$ of type a), b), c), d) or e) and each free side is intersected by an edge of type f). We claim that if an edge  $v_iv_j$ of $G$ is of type e) then two of the 4 sides intersected by $v_iv_j$ are  intersected by at least another edge of $G$. 

\bigskip

\noindent \emph{Proof of Claim}. Let $v_iv_j$ be an edge of $G$ of type e). Then either two sides $s, t$ of $Q_i$ (of $Q_j)$ or one side $s$ of $Q_i$ and one side $t$ of $Q_j$ are such that the lines supporting $s$ and $t$ separate $Q_i$ and $Q_j$ (see Fig. \ref{separating}). 

\begin{figure}[h!]
\centering
  \includegraphics[width= 3in]{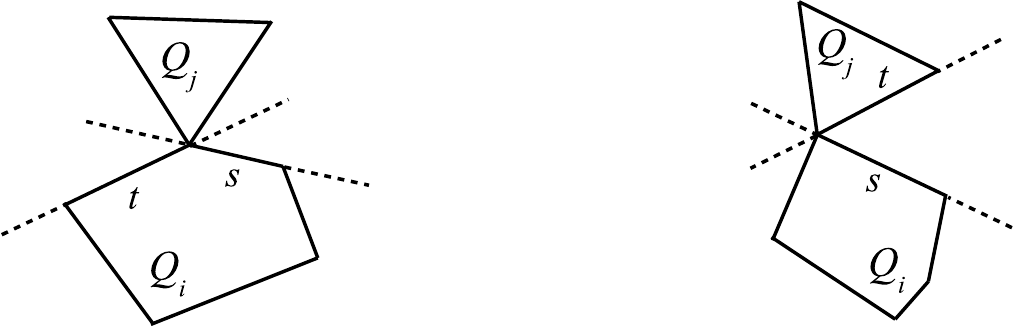}
  \caption{Lines supporting $s$ and $t$ separate $Q_i$ and $Q_j$.}
  \label{separating}
\end{figure}

Without loss of generality we assume the line supporting a side $s$ of $Q_i$ separates $Q_i$ and $Q_j$. Let $b$ be the other side of $Q_i$ incident in the common vertex $p$ of $Q_i$ and $Q_j$. Then side $b$ must contain a point of a polygon $Q_k$ other than $Q_i$ and $Q_j$, otherwise $b$ could be translated away from $Q_i$ in the direction orthogonal to $b$ which is not possible by the properties of $\mathcal{Q}$ (see Fig. \ref{ext}). This means that side $b$ is intersected by an edge of $G$ other than edge $v_iv_j$. 

\begin{figure}[h!]
\centering
  \includegraphics[width= 1.5in]{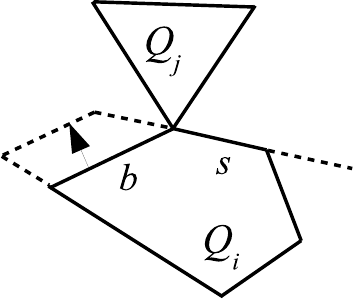}
  \caption{Side $b$ could be translated away from $Q_i$.}
  \label{ext}
\end{figure}

Analogously a side $c \neq b$, incident with $t$, is intersected by an edge of $G$ other than $v_iv_j$. \hfill $\square$

\bigskip

Let $T$ denote the total number of sides among polygons in $\mathcal{Q}$. Also let $m$, $m_e$ and $m_f$ denote the number of edges of $G$, the number of edges of $G$ of type e) and the number of edges of $G$ of type f), respectively. Since polygons in $\mathcal{Q}$ contain at most 3 free edges, $m_f = x_1 + 2x_2 + 3x_3 $, where for $i = 1, 2, 3$ $x_i$ is the number of polygons in $Q$ containing $i$ free edges.

In order to bound the number of sides among polygons in $\mathcal{Q}$ we add the number of sides intersected by edges of $G$ and subtract the number of sides intersected by at least two edges of $G$ (2 sides for each edge of type e)). 
Considering that edges of types a), b), c), and d), intersect at most 3 sides; edges of type e) intersect 4 sides and edges of type f) intersect one side, we have: 
\begin{align*}
    T &\leq (3(m - (m_e + m_f)) + 4m_e + m_f) - 2m_e\\
    &= 3m - m_e - 2m_f\\
    &\leq 3m - 2m_f\\
    &\leq 3((3n - 3) + (x_2 + 2x_3)) - 2(x_1 + 2x_2 + 3x_3)\\
    &= 9n - 9 - 2x_1 - x_2 \\
    &\leq 9n - 9.
    \end{align*}

\end{proof}

Consider the following familly of plane geometric graphs $G_t$, $t=1,2, \ldots $ given by  Edelsbrunner \emph{et al} \cite{ERS}  where it is used to show that the bounds in the number of sides and slopes on Theorem \ref{TeoEdelsbrunner} are tight. 

$G_1$ is the graph with $7$ vertices shown on Fig. \ref{G_t} (left); for $t \geq 1$, $G_{t+1}$ is the graph obtained from $G_t$ as in Fig. \ref{G_t} (right). For $k \geq 1$ the graph $G_k$ is a plane geometric graph with $n = 3k$ internal faces. The three inner-most faces are quadrilaterals while all other faces are hexagons. 

\begin{figure}[h!]
\centering
  \includegraphics[width= 3.5in]{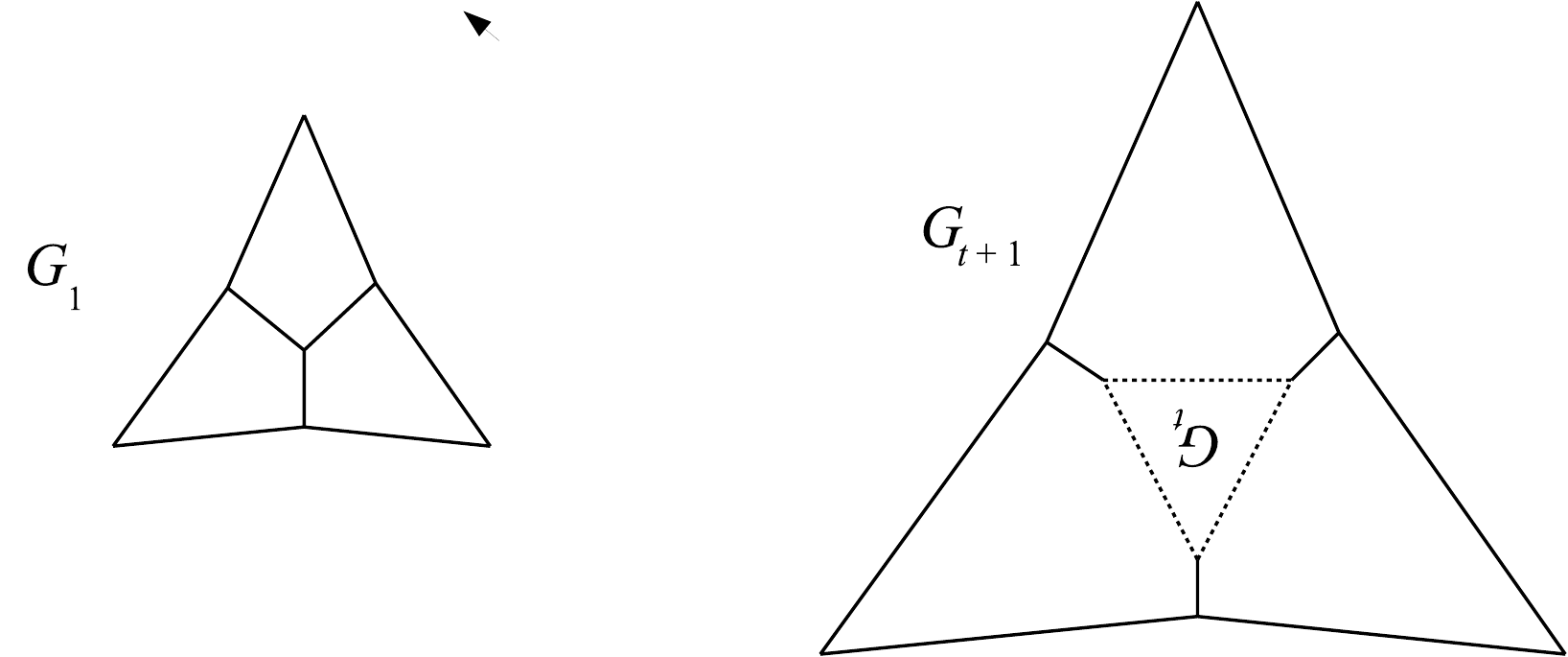}
  \caption{Left: Graph $G_1$. Right: Graph $G_{t+1}$ obtained from graph $G_t$ (placed upside down inside the dotted triangle) .}
  \label{G_t}
\end{figure}

Place an hexagon inside each of the three inner-most faces of $G_k$ as in Fig. \ref{HexagonOctagon} (left), an octagon inside each outer-most internal face of $G_k$ as in Fig. \ref{HexagonOctagon} (right) and a nonagon inside each other internal face of $G_k$ as in Fig. \ref{Nonagon}.

\begin{figure}[h!]
\centering
  \includegraphics[width= 3.5in]{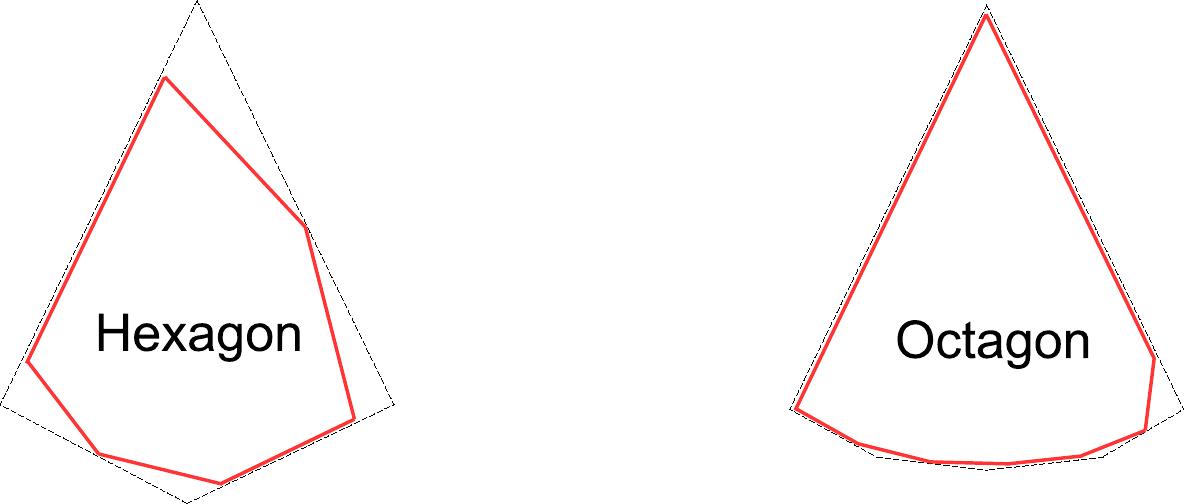}
  \caption{Hexagon inside inner-most face of $G_k$. Octagon inside outer-most internal face of $G_k$.}
  \label{HexagonOctagon}
\end{figure}

\begin{figure}[h!]
\centering
  \includegraphics[width= 1.5in]{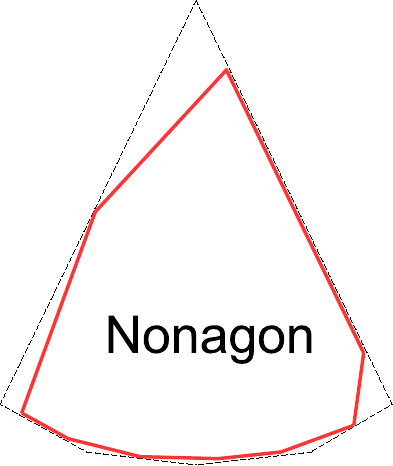}
  \caption{Nonagon inside internal face of $G_k$}
  \label{Nonagon}
\end{figure}

Since each face of $G_k$ has an even number of sides, the placement of the polygons described above may be done in such a way that for each internal edge $e$ of $G_k$ a side $s_e$ of the polygon $R_i$ lying on one side of $e$ is parallel (and closed enough) to $e$ while a vertex $v_e$ of the polygon $R_j$ lying on the other side of $e$ lies on $e$ as in Fig. \ref{edge}.

\begin{figure}[h!]
\centering
  \includegraphics[width= 2.5in]{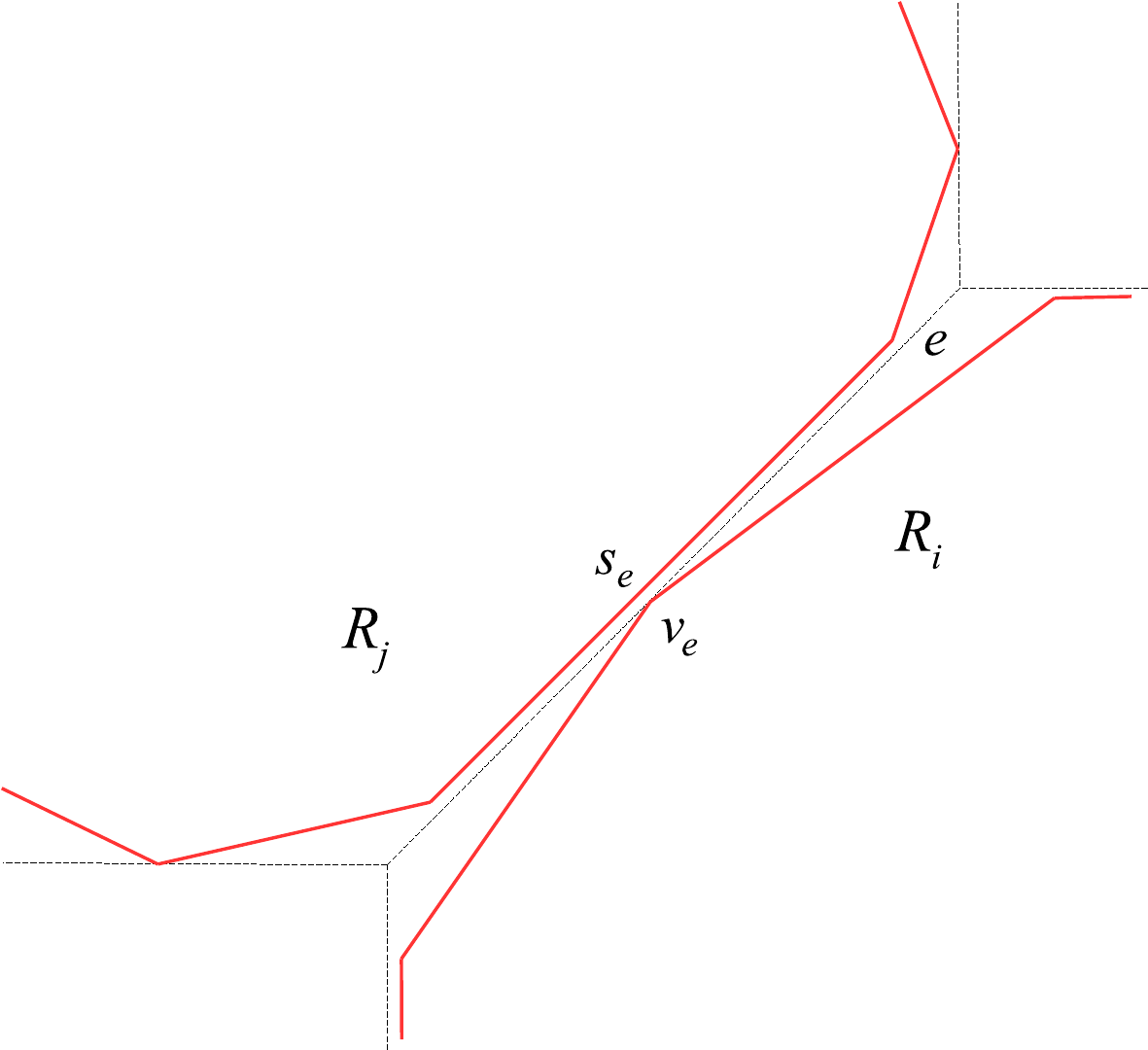}
  \caption{Polygons on both sides of an internal edge of $G_k$.}
  \label{edge}
\end{figure}

Let $\mathcal{R} = \{R_1, R_2, \ldots R_n\}$ be the set of polygons described above. The total number of sides among polygons in $\mathcal{R}$ is 
$$m=3(6) + 3(8) + (n-6)(9) = 9n-12$$
Each side among polygons in $\mathcal{R}$ is relevant with respect to $\mathcal{R}$, therefore our bound on the number of sides in Lemma \ref{lema} is almost tight.

\section{Main Results}

In this section we present our main results.

\begin{lemma}
\label{lemaprincipal}

Let $\mathcal{P} = \{P_1, P_2, \ldots, P_n\}$ be a collection of $n \geq 3$ pairwise disjoint convex polygons in the plane. There is a side $s$ of a polygon  $P_i$ such that the line supporting $s$ separates $P_i$ from a subcollection $\mathcal{F}$ of $\mathcal{P}$ with at least $\frac{n}{18}$ polygons.
\end{lemma}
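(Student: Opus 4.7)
The plan is to apply Lemma~\ref{lema} to enlarge $\mathcal{P}$ to a family $\mathcal{R}=\{R_1,\ldots,R_n\}$ of pairwise non-overlapping convex polygons with $P_i\subseteq R_i$, each side of $R_i$ supporting a side of $P_i$, and total side count at most $9(n-1)$. The main argument is then a double count over the sides of $\mathcal{R}$: for each side $s$ of some $R_i$, let $\sigma(s)$ be the number of indices $j\neq i$ such that $R_j$ lies in the closed half-plane opposite $R_i$ across the line supporting $s$. The goal is to bound $\sum_s\sigma(s)$ from below by $\binom{n}{2}$; then pigeonhole against $9(n-1)$ yields a side $s^*$ with $\sigma(s^*)\geq n/18$.

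The geometric heart of the proof is the claim that for every pair $i\neq j$ there is a side of $R_i$ or of $R_j$ whose supporting line separates $R_i$ from $R_j$. I intend to prove this via the two inner common tangents $\ell_1,\ell_2$ of the interior-disjoint convex polygons $R_i,R_j$. Parametrising the supporting lines of $R_i$ by outward normal direction in $S^1$, the set of directions whose supporting line separates $R_j$ is exactly $\{\theta:h_{R_i\oplus(-R_j)}(\theta)\leq 0\}$; since $R_i\oplus(-R_j)$ is convex and does not contain the origin, this set is a closed arc with extreme members $\ell_1,\ell_2$. If the arc contains the outward normal of some side of $R_i$, that side already witnesses the claim. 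Otherwise the arc lies entirely inside a single vertex interval of $R_i$, forcing both $\ell_1,\ell_2$ to pass through one common vertex $v$ of $R_i$. Running the symmetric dichotomy for $R_j$ either hands us a side of $R_j$ with a separating supporting line, or forces $\ell_1,\ell_2$ to both pass through a common vertex $w$ of $R_j$. Since $v\neq w$, the unique line through $v,w$ would give $\ell_1=\ell_2$, a contradiction unless the arc is a single point, in which case a short ad hoc check (the touching case, where $R_i,R_j$ share a boundary segment or vertex) produces the required edge directly.

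Granted the claim, each unordered pair $\{i,j\}$ contributes at least $1$ to $\sum_s\sigma(s)$, so
\[
\sum_s\sigma(s)\ \geq\ \binom{n}{2}.
\]
Combined with the bound $9(n-1)$ on the total number of sides in $\mathcal{R}$, this yields a side $s^*$ of some polygon $R_i$ satisfying $\sigma(s^*)\geq \binom{n}{2}/(9(n-1))=n/18$. By part~2 of Lemma~\ref{lema}, the line supporting $s^*$ also supports a side $s$ of $P_i$, and because $P_j\subseteq R_j$ for every $j$ this same line separates $P_i$ from at least $n/18$ polygons of $\mathcal{P}$, furnishing the required side $s$.

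The main obstacle I expect is the geometric claim in the second paragraph: showing that the separating arc cannot avoid the edge normals of both $R_i$ and $R_j$ simultaneously. Once this is established, the counting step and the passage back from $\mathcal{R}$ to $\mathcal{P}$ via Lemma~\ref{lema} are routine.
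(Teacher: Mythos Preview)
Your proposal is correct and follows essentially the same approach as the paper: apply Lemma~\ref{lema}, use the fact that every pair $R_i,R_j$ is separated by the supporting line of some side of one of them, double count to get $\binom{n}{2}$ incidences against at most $9(n-1)$ sides, and transfer back to $\mathcal{P}$ via property~(2). The paper simply asserts the separating-side claim without proof, whereas you supply a justification; note that your argument can be shortened by observing that the edge normals of $R_i\oplus(-R_j)$ are exactly the edge normals of $R_i$ together with the negatives of those of $R_j$, so any edge of $R_i\oplus(-R_j)$ whose supporting line weakly separates the origin immediately yields the desired side.
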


\begin{proof}
Let $\mathcal{R} = \{R_1, R_2, \ldots R_n\}$ be a collection of pairwise disjoint convex polygons satisfying conditions 1), 2) and 3) in Lemma \ref{lema} and let $\mathcal{L} = \{l_1, l_2, \ldots, l_m\}$ be the (multi)set of lines supporting the sides of each polygon in $\mathcal{R}$. We include $c$ copies of a line $l$ if $l$ supports sides of $c$ polygons in $\mathcal{R}$. Therefore we can associate a unique polygon $R_{i(k)}$
to each line $l_k \in \mathcal{L}$ such that a side of $R_{i(k)}$ is supported by $l_k$. For $k=1, 2, \ldots, m$ let $H_k^-$ be the closed halfplane determined by $l_k$ that does not contain $R_{i(k)}$.

Define a bipartite graph $F$ with a vertex $v_j$ for each polygon $R_j \in \mathcal{R}$ and a vertex $w_k$ for each line $l_k \in \mathcal{L}$. Graph $F$ has an edge $v_jw_k$ if polygon $R_j$ is contained in $H_k^-$.

For each pair of polygons  $R_i, R_j$ there is at least one side $s$ of one of them such that the line supporting $s$ separates $R_i$ from $R_j$. Therefore graph $F$ has at least one edge for each pair $\{i,j\}$ with $1 \leq i < j \leq n$. This implies that there is a vertex $w_k$ whose degree in $F$ is at least $\binom{n}{2}/m \geq \binom{n}{2}/(9n-9) = n/18
$.

This means that the line $l_k$ separates polygon $R_{i(k)}$ from at least $n/18$ polygons in $\mathcal{R}$. By Property 2) in Lemma \ref{lema}, line $l_k$ supports a side of polygon $P_{i(k)}$.

\end{proof}

\begin{theorem}

\label{teoremaprincipal}
In any collection $\mathcal{C}$ of $n \geq 2$  pairwise disjoint compact convex sets in the plane, there is a pair of sets $A$ and $B$ such that any line that separates $A$ from $B$ separates either $A$ or $B$ from a subcollection $\mathcal{F}$ of $\mathcal{C}$ with at least $\frac{n}{18}$ sets. 
\end{theorem}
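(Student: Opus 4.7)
When $n=2$, taking $A$ and $B$ to be the two sets of $\mathcal{C}$ works trivially: any line separating $A$ from $B$ separates $A$ from the subcollection $\{B\}$ of size $1 \geq 2/18$.

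For $n \geq 3$, I would reduce to Lemma~\ref{lemaprincipal} by approximating $\mathcal{C}$ from outside by a family of pairwise disjoint convex polygons. Since the compact sets $C_i$ are pairwise disjoint, their minimum pairwise distance $\delta$ is positive; one may then approximate each $C_i$ by a convex polygon $P_i \supseteq C_i$ (e.g.\ the intersection of finitely many supporting halfplanes of $C_i$) of Hausdorff distance less than $\delta/3$, ensuring $P_1,\dots,P_n$ remain pairwise disjoint. Applying Lemma~\ref{lemaprincipal} to $\{P_1,\dots,P_n\}$ yields a polygon $P_{i_0}$, a side $s$ of $P_{i_0}$, and a subcollection $\{P_{j_1},\dots,P_{j_k}\}$ with $k \geq n/18$, such that the line $\ell$ supporting $s$ separates $P_{i_0}$ from each $P_{j_r}$. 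Because $C_j \subseteq P_j$ for all $j$, $\ell$ also separates $C_{i_0}$ from each $C_{j_r}$.

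The delicate part is choosing $A$ and $B$ in $\mathcal{C}$ so that \emph{every} separating line of $A,B$ — not merely $\ell$ — witnesses $\geq n/18$ separated sets. I would take $A = C_{i_0}$ and $B = C_{j_0}$ where $j_0 \in \{j_1,\dots,j_k\}$ is chosen so that the outer polygon of $C_{j_0}$ is flush with $s$ in the fully-grown collection $\mathcal{Q}$ obtained during the proof of Lemma~\ref{lema}; then $\ell$ serves as a common supporting line of the outer polygons of $A$ and $B$ from opposite sides, placing $\ell$ at one boundary of the family of separating lines of $A,B$. For any separating line $\ell'$ of $A,B$, each $C_{j_r}$ ($r\neq 0$) lies either on $A$'s side of $\ell'$ (in which case $\ell'$ separates $B$ from $C_{j_r}$), on $B$'s side (in which case $\ell'$ separates $A$ from $C_{j_r}$), or is cut by $\ell'$.

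The main obstacle is then to bound the number of $C_{j_r}$'s cut by $\ell'$. Because $\ell$ is a common supporting line of the outer polygons of $A$ and $B$, the family of separating lines of $A,B$ forms a cone bounded by common inner tangent lines; as $\ell'$ rotates within this cone, each $C_{j_r}$ transitions at most once between the $A$-side and the $B$-side of $\ell'$, so at any fixed $\ell'$ only a controlled number of the $C_{j_r}$'s are in a crossing state. A pigeonhole on the non-cut sets over the two ``sides'' then delivers the required bound of $n/18$. Establishing this cutting bound cleanly via the geometry of the common-tangent cone, and verifying that the required adjacency $j_0$ can always be found in $\{j_1,\dots,j_k\}$, is the technical heart of the argument.
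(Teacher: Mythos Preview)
Your reduction to Lemma~\ref{lemaprincipal} is the right idea, but the way you build the polygons $P_i$ throws away exactly the information you need in the last step, and your attempt to recover it is neither complete nor correct as stated.

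In the paper the polygons are not arbitrary outer approximations of the $C_i$. For each pair $i,j$ one first fixes a line $l_{ij}$ separating $C_i$ from $C_j$ that \emph{minimises} $\max\{t_l^-,t_l^+\}$ over all separating lines of that pair, and then sets $P_i=T\cap\bigcap_{j\ne i}H_{ij}$. Thus every side of every $P_i$ is supported by one of these optimal lines. When Lemma~\ref{lemaprincipal} produces a side $s$ of some $P_i$ whose supporting line separates $P_i$ from $\ge n/18$ polygons, that supporting line is some $l_{ij}$, so $\max\{t_{l_{ij}}^-,t_{l_{ij}}^+\}\ge n/18$. By the minimality in the choice of $l_{ij}$, \emph{every} line $l$ separating $C_i$ from $C_j$ satisfies $\max\{t_l^-,t_l^+\}\ge n/18$, which is exactly the conclusion. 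No cone, no rotation, no adjacency argument is needed.

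By contrast, your $P_i$ are built from arbitrary supporting halfplanes, so the side $s$ that Lemma~\ref{lemaprincipal} hands you carries no extremal property with respect to other separators of any pair. You then try to manufacture one by choosing $B=C_{j_0}$ ``flush with $s$'' in the auxiliary family $\mathcal{Q}$ from the proof of Lemma~\ref{lema}. This is problematic on two counts. First, $\ell$ supports a side of $R_{i_0}$, and there is no reason the neighbouring polygon across that side in $\mathcal{Q}$ must be one of the $R_{j_r}$ that $\ell$ separates from $R_{i_0}$; you assert this adjacency can ``always be found'' without argument. Second, even granting that $\ell$ is a common inner tangent of $A$ and $B$, your rotating-line argument is only sketched: the claim that each $C_{j_r}$ ``transitions at most once'' as $\ell'$ sweeps the cone is not justified (the cone of separating lines of two convex sets is two-parameter, not a one-parameter rotation), and you yourself flag the cutting bound as unproved. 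As written, the proposal stops precisely at the point where the real work would begin. The paper's device of building $P_i$ from optimal separators eliminates this difficulty entirely.
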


\begin{proof}
Let $n \geq 2$ be an integer and $\mathcal{C} = \{C_1, C_2, \ldots, C_n\}$ be a collection of pairwise disjoint compact convex sets in the plane and let $T$ be a triangle containing all sets in $\mathcal{C}$ in its interior.

For any line $l$ let $t_l^-$ and $t_l^+$ be the number of sets in $\mathcal{C}$ lying to the left and to the right of $l$, respectively; for a horizontal line $l$, $t_l^-$ and $t_l^+$ are the number of sets in $\mathcal{C}$ lying above and bellow $l$, respectively. For $1 \leq i < j \leq n$ let $l_{ij}$ be a line separating $C_i$ from $C_j$ for which $max\{t_{l_{ij}}^-, t_{l_{ij}}^+\}$ is as small as posible. 

Each line $l_{ij}$ determines two closed halfplanes $H_{ij}$ and $H_{ji}$ containing $C_i$ and $C_j$, respectively. For $i= 1, 2, \ldots, n$, let $P_i = T \cap ( {\bigcap}_{j\neq i}^n H_{ij})$. Then For $i= 1, 2, \ldots, n$, $P_i$ is a convex polygon that contains set $C_i$ such that each side is supported by a line $l_{ij}$ or by a side of $T$. 

Let $\mathcal{P}= \{P_1, P_2, \ldots, P_n\}$. By Lemma \ref{lemaprincipal}, there is a side $s$ of a polygon $P_i$ such that the line $l(s)$ supporting $s$ separates $P_i$ from a subcollection of $\mathcal{P}$ with at least $\frac{n}{18}$ polygons. 

Since no side of $T$ separates polygons in $\mathcal{P}$, $l(s)$ is of the form $l_{ij}$. By the choice of $l_{ij}$ each line that separates sets $C_i$ from $C_j$ separates either $C_i$ or $C_j$ from at least $\frac{n}{18}$ sets in $\mathcal{C}$.
\end{proof}


\begin{thebibliography}{}

\bibitem{ERS}
Herbert Edelsbrunner, Arch D. Robison and  Xiao Jun Shen. Covering convex sets with non-overlapping polygons,  \emph{Discrete Mathematics} 81: 153 -- 164, 1990.

\bibitem{F}
L\'aszl\'o Fejes T\'oth.  Illumination of convex discs, \emph{Acta Mathematica Academiae Scientiarum Hungaricae} 29(3 - 4): 355 -- 360, 1997.

\bibitem{HK}
Rafael Hope and Meir Katchalski. Separating plane convex sets, \emph{Mathematica Scandinavica} 66 (1): 44--46, 1990.

\bibitem{RT}

Eduardo Rivera-Campo and Jeno T\"or\"ocsik. On separation of plane convex sets, \emph{European Journal of Combinatorics} 14: 113 -- 116, 1993.

\bibitem{T} 
Helge Tverberg. A separation property of plane convex sets, \emph{Mathematica Scandinavica} 45: 255--260, 1979.


\end{thebibliography}
\end{document}